\renewcommand{\div}{{\rm div}\,}
\def\be{\mbox{\boldmath $e$}}
\def\br{\mbox{\boldmath $R$}}
\def\etab{\mbox{\boldmath $\eta$}}
\def\bN{\mbox{\boldmath $N$}}
\def\R{\mathbb{R}}
\def\bR{\mbox{\boldmath $r$}}
\def\bt{\mbox{\boldmath $t$}}
\def\bV{\mbox{\boldmath $V$}}
\def\1{\mbox{\boldmath $1$}}
\def\0{\mbox{\boldmath $0$}}
\def\cH{\mathcal{H}}
\def\cK{\mathcal{K}}
\def\<{\langle}
\def\>{\rangle}
\theoremstyle{plain}
\newtheorem{thm}{Theorem}[section]
\newtheorem{lem}[thm]{Lemma}
\newtheorem{rmk}[thm]{Remark}
\title[First variation]{A derivation of first variation formulas from the strain-displacement relations in thin shell theory}
\author{Yoshiki Jikumaru}
\address{Faculty of Information Networking for Innovation And Design, Toyo University, 1-7-11 Akabanedai, Kita-ku, Tokyo, 115-8650, Japan}
\email{jikumaru@toyo.jp}
\begin{document}
\maketitle
\begin{abstract}
In this paper, we derive the first variation formulas for surfaces in 3-dimensional Euclidean space by using the ``strain-displacement relations'' known in thin shell theory.
For applications to architectural surface design, we focus on the objective function which has linear Weingarten surfaces as stationary points.
This article aims to provide an elementary and cross-disciplinary exposition for applications without any tensor calculus, and thus, we do not give any new mathematical results.
\end{abstract}

\section{Introduction}
In recent years, the application of differential geometry in architectural surface design has been actively studied, forming a large field called \textit{architectural geometry}.
This paper focuses on linear Weingarten surfaces, one of the important surface classes in architectural surface design, from the viewpoint of the variational principle.
The mechanical properties of linear Weingarten surfaces in the context of shell membrane theory can be found in \cite{Novozhilov,RogersSchief}.
The energy functional which has linear Weingarten surfaces as stationary points and related studies are classically well known, see for example \cite{Chen1,Chen2,Hsuing,Nitsche,PinlTrapp,Reilly} and the references therein.
Thus, this paper does not provide any new mathematical results.
However, it is essential to create a cross-disciplinary environment in which results in mathematics can be applied to architectural surface design.
Therefore, this paper is written to be accessible to structural engineering researchers with basic knowledge of differential geometry of surfaces, avoiding abstract Riemannian geometry or tensor calculus, and completing calculations as elementary as possible.
In the calculations, we show how the strain-displacement relation found in standard textbooks on classical thin shell theory can be used to derive the first variation formulas.

\section{Preliminaries}

In the theory of continuum mechanics, thin shells can be studied using classical differential geometry of surfaces by considering the middle surface of the shell as a smooth surface in the 3-dimensional Euclidean space.
For basic terminologies, see for example \cite{GreenZerna,Love,Novozhilov}.
In this paper, we use the symbols in \cite{Novozhilov,RogersSchief}.

Let us consider a surface $\bR = \bR (\alpha, \beta)$ in the 3-dimensional Euclidean space parametrized by the curvature line coordinates $(\alpha, \beta)$,
that is, the first and second fundamental forms of the surface are diagonalized.
We denote the first fundamental form $I$ by
\begin{equation}
I = A_1^2 \, d\alpha^2 + A_2^2 \, d\beta^2,\quad
A_1^2 = \bR_\alpha \cdot \bR_\alpha, \quad A_2^2 = \bR_\beta \cdot \bR_\beta,
\end{equation}
where the subscripts represent the partial derivatives.
Let $\be_1$ and $\be_2$ be the unit tangent vectors on the surface defined by the following relations:
\begin{equation}
\label{eq:tangent1}
\bR_\alpha = A_1 \be_1, \quad \bR_\beta = A_2 \be_2.
\end{equation}
Since the second fundamental form is also diagonalized, we have $\bR_{\alpha\beta} \perp \bN$, where $\bN$ is the unit normal vector on the surface.
A direct calculation shows that the integrability condition $(\bR_\alpha)_\beta = (\bR_\beta)_\alpha$ for the linear system \eqref{eq:tangent1} is given by
\begin{equation}
(A_1)_\beta = p A_2, \quad (A_2)_\alpha = q A_1, \quad 
(\be_1)_\beta = q \be_2, \quad (\be_2)_\alpha = p \be_1,
\end{equation}
where the first and second equations define $p$ and $q$.
Since $\bN$ is the unit vector, we have the relations
\begin{equation}
\bN_\alpha = H_\circ \be_1, \quad \bN_\beta = K_\circ \be_2,
\end{equation}
for some scalar values $H_\circ$ and $K_\circ$.
Then, the \textit{principal curvatures} $\kappa_1, \kappa_2$ and the \textit{principal curvature radii} $R_1, R_2$ are defined as follows:
\begin{equation}
H_\circ = - \kappa_1 A_1 = \frac{A_1}{R_1}, \quad K_\circ = - \kappa_2 A_2 = \frac{A_2}{R_2}.
\end{equation}
In particular, we have
\begin{equation}
\label{eq:Weingarten}
\bN_\alpha = - \kappa_1 \bR_\alpha, \quad
\bN_\beta = - \kappa_2 \bR_\beta.
\end{equation}
This formula is called the \textit{Weingarten formula} or \textit{Rodrigues formula}.
With these notations, the \textit{Gauss formula} is given by
\begin{equation}
(\be_1)_\alpha = - p \be_2 - H_\circ \bN, \quad (\be_2)_\alpha = p \be_1, \quad 
(\be_1)_\beta = q \be_2, \quad (\be_2)_\beta = - q \be_1 - K_\circ \bN,
\end{equation}
Or equivalently, we have the \textit{Gauss-Weingarten formula} in the matrix form:
\begin{equation}
\begin{aligned}
(\be_1, \be_2, \bN)_\alpha &= (\be_1, \be_2, \bN)
\begin{pmatrix}
0 & p & H_\circ \\
- p & 0 & 0 \\
- H_\circ & 0 & 0
\end{pmatrix}, \\
(\be_1, \be_2, \bN)_\beta &= (\be_1, \be_2, \bN)
\begin{pmatrix}
0 & - q & 0 \\
q & 0 & K_\circ \\
0 & - K_\circ & 0
\end{pmatrix}.
\end{aligned}
\end{equation}
The integrability condition for the Gauss-Weingarten system is called the \textit{Gauss-Mainardi-Codazzi equations}, which can be represented as follows:
\begin{equation}
p_\beta + q_\alpha + H_\circ K_\circ = 0, \quad
(H_\circ)_\beta = p K_\circ, \quad
(K_\circ)_\alpha = q H_\circ.
\end{equation}

\section{The strain-displacement relations}

If we consider the surface $\bR = \bR(\alpha, \beta)$ as the middle surface of a thin shell model, then we can consider the deformation of the middle surface as follows:
\begin{equation}
\br = \bR +  \Delta, \quad \Delta = u \be_1 + v \be_2 + w \bN,
\end{equation}
where $\Delta$ is called the \textit{displacement vector}.
For the deformed surface $\br$, we have the following relations:
\begin{equation}
\label{eq:deform_tangent}
\frac{1}{A_1} \br_\alpha = (1 + \varepsilon_1) \be_1 + \omega_1 \be_2 - \vartheta \bN, \quad
\frac{1}{A_2} \br_\beta = \omega_2 \be_1 + (1 + \varepsilon_2) \be_2 - \psi \bN,
\end{equation}
where
\begin{align}
\begin{aligned}
\varepsilon_1 &= \frac{1}{A_1} (u_\alpha + p v + H_\circ w), \quad
\varepsilon_2 = \frac{1}{A_2} (v_\beta + qu + K_\circ w), \\
\omega_1 &= \frac{1}{A_1} (v_\alpha - pu), \quad
\omega_2 = \frac{1}{A_2} (u_\beta - q v), \\
\vartheta &= \frac{1}{A_1} (- w_\alpha + H_\circ u), \quad
\psi = \frac{1}{A_2} (- w_\beta + K_\circ v).
\end{aligned}
\end{align}
The above relations are called the \textit{strain-displacement relations}.
Here, $\varepsilon_1$ and $\varepsilon_2$ are called the \textit{normal strains}, and $\omega = \omega_1 + \omega_2$ is called the \textit{shear strain}.

Next, we introduce the ``infinitesimal'' version of the strain-displacement relations.
We replace the displacement vector $\Delta$ with the variation vector $\bV$: 
\begin{equation}
\br = \bR +  t \cdot \bV, \quad \bV = v_1 \be_1 + v_2 \be_2 + v_n \bN, \quad |t| \ll 1,
\end{equation}
where $t$ is the deformation parameter.
For convenience, we decompose the variation vector $\bV$ to the tangential component $\etab$ and the normal component as follows:
\begin{equation}
\bV = \etab + v_n \bN, \quad \etab = v_1 \be_1 + v_2 \be_2.
\end{equation}
In the same way as before, we introduce the following symbols:
\begin{align}
\begin{aligned}
\overline{\varepsilon}_1 &= \frac{1}{A_1} ((v_1)_\alpha + p v_2 + H_\circ v_n), \quad
\overline{\varepsilon}_2 = \frac{1}{A_2} ((v_2)_\beta + q v_1 + K_\circ v_n), \\
\overline{\omega}_1 &= \frac{1}{A_1} ((v_2)_\alpha - p v_1), \quad
\overline{\omega}_2 = \frac{1}{A_2} ((v_1)_\beta - q v_2), \\
\overline{\vartheta} &= \frac{1}{A_1} (- (v_n)_\alpha + H_\circ v_1), \quad
\overline{\psi} = \frac{1}{A_2} (- (v_n)_\beta + K_\circ v_2).
\end{aligned}
\end{align}
In the following, we will refer to these relations as the \textit{infinitesimal strain-displacement relations}.

\section{The first variation of the area}

Let $M$ be a $2$-dimensional manifold with boundary $\partial M$ and $\bR: M \to \R^3$ be a smooth immersion.
Then the area of the immersion can be defined as follows:
\begin{equation}
A = \int_M dA,
\end{equation}
where the area element $dA$ is defined by $dA = \| \bR_\alpha \times \bR_\beta \| \, d\alpha d\beta$ in the local coordinates $(\alpha, \beta)$.
We denote the first fundamental form $I'$ of the deformed surface $\br$ as follows:
\begin{equation}
I' = A_1^{\prime2} \, d\alpha^2 + A_2^{\prime2} \, d\beta^2, \quad
A_1^{\prime2} = \br_\alpha \cdot \br_\alpha, \quad
A_2^{\prime2} = \br_\beta \cdot \br_\beta.
\end{equation}
Let $dA'$ be the area element of $\br$.
\begin{lem}
The first variation of the area element can be written as the summation of the infinitesimal normal strains:
\begin{equation}
\delta dA = \left. \frac{d}{dt} \right|_{t=0} dA = (\overline{\varepsilon}_1 + \overline{\varepsilon}_2) \, dA.
\end{equation}
\end{lem}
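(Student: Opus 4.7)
The plan is to write $dA' = \|\br_\alpha\times\br_\beta\|\,d\alpha\,d\beta$ for the deformed surface $\br = \bR + t\bV$, expand $\br_\alpha$ and $\br_\beta$ to first order in $t$ in the orthonormal frame $\{\be_1,\be_2,\bN\}$ using the Gauss--Weingarten formula, and then read off the linear term of the cross-product norm.

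First I would compute $\bV_\alpha$ and $\bV_\beta$ by differentiating $\bV = v_1\be_1 + v_2\be_2 + v_n\bN$ and substituting the Gauss--Weingarten relations for $(\be_i)_\alpha,(\be_i)_\beta,\bN_\alpha,\bN_\beta$. Collecting coefficients of $\be_1,\be_2,\bN$ yields exactly the infinitesimal strain-displacement expressions, namely
\begin{equation*}
\bV_\alpha = A_1\bigl(\overline{\varepsilon}_1\be_1 + \overline{\omega}_1\be_2 - \overline{\vartheta}\,\bN\bigr),\qquad
\bV_\beta  = A_2\bigl(\overline{\omega}_2\be_1 + \overline{\varepsilon}_2\be_2 - \overline{\psi}\,\bN\bigr),
\end{equation*}
which is the infinitesimal analogue of \eqref{eq:deform_tangent}. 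Hence $\br_\alpha = A_1\be_1 + t\bV_\alpha$ and $\br_\beta = A_2\be_2 + t\bV_\beta$ are expressed in the same orthonormal frame, and all further computation reduces to bookkeeping.

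Next I would compute $\br_\alpha\times\br_\beta$ modulo $O(t^2)$. Using $\be_1\times\be_2=\bN$, $\be_2\times\bN=\be_1$, $\bN\times\be_1=\be_2$ and dropping the self-cross terms, the linear part in $t$ contributes $(\overline{\varepsilon}_1+\overline{\varepsilon}_2)\bN$ in the normal direction and tangential corrections of order $t$ only. Therefore
\begin{equation*}
\br_\alpha\times\br_\beta = A_1 A_2\bigl(1 + t(\overline{\varepsilon}_1+\overline{\varepsilon}_2)\bigr)\bN + t\,A_1A_2(-\overline{\vartheta}\,\be_1 + \overline{\psi}\,\be_2) + O(t^2),
\end{equation*}
and taking the squared norm, the tangential corrections contribute only at $O(t^2)$, so
\begin{equation*}
\|\br_\alpha\times\br_\beta\|^2 = A_1^2 A_2^2\bigl(1 + 2t(\overline{\varepsilon}_1+\overline{\varepsilon}_2)\bigr) + O(t^2).
\end{equation*}
Extracting the square root gives $\|\br_\alpha\times\br_\beta\| = A_1A_2\bigl(1 + t(\overline{\varepsilon}_1+\overline{\varepsilon}_2)\bigr) + O(t^2)$, and differentiating at $t=0$ yields $\delta dA = (\overline{\varepsilon}_1+\overline{\varepsilon}_2)\,dA$.

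The calculation is essentially routine; the only step requiring care is verifying that the tangential $O(t)$ contributions to $\br_\alpha\times\br_\beta$ do not enter the first variation of the norm, which follows because they are orthogonal to the leading term $A_1A_2\bN$ and therefore appear only at quadratic order in the expansion of $\|\cdot\|^2$. I would expect this observation, together with the clean identification $\bV_\alpha,\bV_\beta \leftrightarrow (\overline{\varepsilon}_i,\overline{\omega}_i,\overline{\vartheta},\overline{\psi})$ via Gauss--Weingarten, to constitute the entire substance of the proof.
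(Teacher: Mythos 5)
Your proof is correct and follows essentially the same route as the paper: expand $\br_\alpha \times \br_\beta$ in the frame $\{\be_1,\be_2,\bN\}$ via the strain--displacement relations and observe that the tangential first-order terms are orthogonal to the leading term $A_1A_2\bN$ and hence enter the norm only at second order. (A minor slip: the tangential part of $\frac{1}{A_1A_2}\br_\alpha\times\br_\beta$ is $+\overline{\vartheta}\,\be_1 + \overline{\psi}\,\be_2$, not $-\overline{\vartheta}\,\be_1 + \overline{\psi}\,\be_2$, but as you correctly note this term does not affect the first variation.)
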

\begin{proof}
By using the relation \eqref{eq:deform_tangent}, we have
\begin{equation}
\begin{aligned}
\frac{1}{A_1A_2} (\br_\alpha \times \br_\beta)
= \vartheta \be_1 + \psi \be_2 + (1 + \varepsilon_1 + \varepsilon_2) \bN + O(t^2).
\end{aligned}
\end{equation}
Therefore we have
\begin{equation}
\begin{aligned}
dA'
&= \| \br_\alpha \times \br_\beta \| \, d\alpha d\beta\\
&= (\vartheta^2 + \psi^2 + (1 + \varepsilon_1 + \varepsilon_2)^2)^{1/2} \, A_1 A_2 \, d\alpha d\beta + O(t^2)\\
&= \left( 1 + \frac{1}{2} (2 (\varepsilon_1 + \varepsilon_2) + (\varepsilon_1 + \varepsilon_2)^2 + \vartheta^2 + \psi^2) \right) dA + O(t^2)\\
&= (1 + \varepsilon_1 + \varepsilon_2) \, dA + O(t^2).
\end{aligned}
\end{equation}
Here if we replace $(u, v, w)$ with $(tv_1, tv_2, tv_n)$, then we have
\begin{equation}
dA' = dA + t (\overline{\varepsilon}_1 + \overline{\varepsilon}_2) \, dA + O(t^2) \iff
\frac{dA' - dA}{t} = (\overline{\varepsilon}_1 + \overline{\varepsilon}_2) \, dA + O(t),
\end{equation}
which proves the statement.
\end{proof}
\begin{lem}
The summation of the infinitesimal normal strains is given by
\begin{equation}
\overline{\varepsilon}_1 + \overline{\varepsilon}_2 = \div \etab - 2 \cH v_n, \quad
\cH = \frac{1}{2} (\kappa_1 + \kappa_2),
\end{equation}
where $\cH$ is the mean curvature of the middle surface $\bR$.
\end{lem}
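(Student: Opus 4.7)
The plan is to split the sum $\overline{\varepsilon}_1 + \overline{\varepsilon}_2$ into the terms involving $v_n$ and the terms involving only the tangential coefficients $v_1,v_2$, and identify each piece separately with the corresponding side of the target identity. The normal part is immediate from the relations $H_\circ = -\kappa_1 A_1$ and $K_\circ = -\kappa_2 A_2$ recorded in Section~2, which give
\[
\frac{H_\circ v_n}{A_1} + \frac{K_\circ v_n}{A_2} \;=\; -(\kappa_1+\kappa_2)\,v_n \;=\; -2\cH\,v_n,
\]
so the only real work is to match the remaining tangential terms with $\div\etab$.

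For this, I would compute the surface divergence directly from its definition as the trace of the tangential projection of $D\etab$, i.e.
\[
\div\etab \;=\; \frac{1}{A_1}\,\etab_\alpha\cdot\be_1 \;+\; \frac{1}{A_2}\,\etab_\beta\cdot\be_2,
\]
by differentiating $\etab = v_1\be_1 + v_2\be_2$ and applying the Gauss formula for $(\be_i)_\alpha$ and $(\be_i)_\beta$ from Section~2. The expected outcome is that the $\be_1$-component of $\etab_\alpha$ is $(v_1)_\alpha + p v_2$ and the $\be_2$-component of $\etab_\beta$ is $(v_2)_\beta + q v_1$, with the cross and normal components dropping out when we trace. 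Dividing by $A_1$ and $A_2$ then produces exactly the $v_n$-free portion of $\overline{\varepsilon}_1 + \overline{\varepsilon}_2$.

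Combining the two computations yields the stated identity. I do not anticipate any substantial obstacle: the statement is essentially a bookkeeping exercise in the structure equations of Section~2. The two points that require a little care are the sign/scaling conventions relating $H_\circ,K_\circ$ to $\kappa_1,\kappa_2$, and the observation that the naive frame-coefficient formula above really does compute the intrinsic divergence, because projecting onto $\be_1,\be_2$ kills the $\bN$-components produced by $(\be_1)_\alpha$ and $(\be_2)_\beta$ via the Gauss formula. As a consistency check, one may also verify the same formula through the alternative expression $\div\etab = \frac{1}{A_1A_2}\bigl(\partial_\alpha(A_2 v_1) + \partial_\beta(A_1 v_2)\bigr)$ and the integrability identities $(A_1)_\beta = pA_2$, $(A_2)_\alpha = qA_1$, which gives the same result.
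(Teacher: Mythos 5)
Your proposal is correct and follows essentially the same route as the paper: the normal terms are converted via $H_\circ = -\kappa_1 A_1$, $K_\circ = -\kappa_2 A_2$, and the tangential terms are identified with $\div \etab = \frac{1}{A_1}((v_1)_\alpha + p v_2) + \frac{1}{A_2}((v_2)_\beta + q v_1)$. The only cosmetic difference is that you derive this expression for the divergence from the frame-trace definition and the Gauss formula, whereas the paper starts from $\div\etab = \frac{1}{A_1A_2}((v_1A_2)_\alpha + (v_2A_1)_\beta)$ together with $(A_1)_\beta = pA_2$, $(A_2)_\alpha = qA_1$ --- exactly the consistency check you mention at the end.
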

\begin{proof}
Note that the divergence of $\etab = v_1 \be_1 + v_2 \be_2$ is defined by
\begin{equation}
\div \etab = \frac{1}{A_1A_2} \left( (v_1 A_2)_\alpha + (v_2 A_1)_\beta \right).
\end{equation}
By using the relations $(A_1)_\beta = p A_2$, $(A_2)_\alpha = q A_1$, we have
\begin{equation}
\begin{aligned}
\div \etab
&= \frac{1}{A_1 A_2} ((v_1)_\alpha A_2 + v_1 q A_1 + (v_2)_\beta A_1 + v_2 p A_2)\\
&= \frac{1}{A_1} ((v_1)_\alpha + pv_2) + \frac{1}{A_2} ((v_2)_\beta + qv_1).
\end{aligned}
\end{equation}
Since $H_\circ = -\kappa_1 A_1$, $K_\circ = -\kappa_2 A_2$, we have
\begin{equation}
\begin{aligned}
\overline{\varepsilon}_1 + \overline{\varepsilon}_2
&= \frac{1}{A_1} ((v_1)_\alpha + pv_2 + H_\circ v_n) + \frac{1}{A_2} ((v_2)_\beta + qv_1 + K_\circ v_n) \\
&= \frac{1}{A_1} ((v_1)_\alpha + pv_2) + \frac{1}{A_2} ((v_2)_\beta + qv_1) - (\kappa_1 + \kappa_2) v_n\\
&= \div \etab - 2 \cH v_n.
\end{aligned}
\end{equation}
This proves the statement.
\end{proof}
\begin{thm}
\label{thm:area}
The first variation of the area is given by
\begin{equation}
\delta \int_M dA = - 2 \int_M \cH (\bV \cdot \bN) \, dA + \int_{\partial M} \etab \cdot \bt \, ds,
\end{equation}
where $\bt$ is the outward-pointing unit conormal vector and $ds$ is the line element of the boundary $\partial M$, respectively.
\end{thm}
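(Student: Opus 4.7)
The plan is to combine the two preceding lemmas directly and then invoke the divergence theorem, so essentially no new calculation is needed.

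First I would interchange differentiation and integration to write
\begin{equation*}
\delta \int_M dA = \int_M \delta dA,
\end{equation*}
which is legitimate since $\bR$ is a smooth immersion and the variation is smooth with compact support (or with the stated behaviour on $\partial M$). Applying Lemma 4.1 gives $\delta dA = (\overline{\varepsilon}_1 + \overline{\varepsilon}_2)\,dA$, and then Lemma 4.2 rewrites the integrand as $\div \etab - 2\cH v_n$. Thus the problem reduces to
\begin{equation*}
\delta \int_M dA = \int_M \div \etab \, dA - 2 \int_M \cH\, v_n \, dA.
\end{equation*}

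Next I would handle the tangential divergence term by the classical divergence theorem on the surface $M$. Using the explicit expression
\begin{equation*}
\div \etab \, dA = \left((v_1 A_2)_\alpha + (v_2 A_1)_\beta\right) d\alpha\, d\beta,
\end{equation*}
Green's theorem (or equivalently Stokes' theorem in the plane of parameters) converts this into a boundary integral that, after expressing the boundary line element in terms of $A_1, A_2$ and the outward conormal direction, takes the intrinsic form
\begin{equation*}
\int_M \div \etab \, dA = \int_{\partial M} \etab \cdot \bt \, ds.
\end{equation*}
Since this is the only nontrivial step, I would either cite it as the standard divergence theorem for surfaces or verify it by a short coordinate computation; this is the place where one has to be careful with signs and with the definition of $\bt$ as the outward-pointing unit conormal.

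Finally I would identify $v_n = \bV \cdot \bN$ from the decomposition $\bV = \etab + v_n \bN$, so that the normal term becomes $-2\int_M \cH (\bV \cdot \bN)\, dA$. Putting the two pieces together yields the stated formula. The main obstacle, if any, is the careful treatment of the boundary term: one must check that the intrinsic divergence of the tangential field $\etab$ on the surface genuinely produces the conormal integral $\int_{\partial M} \etab \cdot \bt\, ds$, which in the paper's elementary coordinate style amounts to a direct application of Green's theorem in the $(\alpha,\beta)$-plane together with the relations $(A_1)_\beta = pA_2$, $(A_2)_\alpha = qA_1$ already used in Lemma 4.2.
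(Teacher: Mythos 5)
Your proposal is correct and follows exactly the paper's route: combine the two preceding lemmas to get $\delta\, dA = (\div \etab - 2\cH v_n)\, dA$, then apply the divergence theorem on the surface to convert $\int_M \div \etab\, dA$ into the conormal boundary integral. The paper's proof is just as terse, citing the divergence theorem without the coordinate verification you sketch, so there is nothing to add.
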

\begin{proof}
The divergence theorem on the surface
\begin{equation}
\int_M \div \etab \, dA = \int_{\partial M} \etab \cdot \bt \, ds,
\end{equation}
immediately shows the statement.
\end{proof}

\section{The first variation of the volume}

The (algebraic) volume $V$ enclosed by the surface $\bR$ is defined by
\begin{equation}
V = \frac{1}{3} \int_M (\bR \cdot \bN) \, dA,
\end{equation}
where the integrand represents the volume of the ``infinitesimal cone''.
In the local coordinates $(\alpha, \beta)$, the integrand can be written as
\begin{equation}
(\bR \cdot \bN) \, dA = \bR \cdot (\bR_\alpha \times \bR_\beta) \, d\alpha d\beta,
\end{equation}
To derive the first variation formula of the volume, we will show two lemmas.
\begin{lem}
\label{lem:vol1}
The following relation holds
\begin{equation}
\div (\bR - (\bR \cdot \bN) \bN) = 2 (1 + \cH (\bR \cdot \bN)).
\end{equation}
\end{lem}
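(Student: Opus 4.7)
The plan is to recognize that $\bR - (\bR \cdot \bN) \bN$ is exactly the tangential projection of the position vector $\bR$, so I can feed $\bR$ itself into the infinitesimal strain-displacement machinery set up in Section~3 and apply Lemma~4.2. Explicitly, writing $v_1 = \bR \cdot \be_1$, $v_2 = \bR \cdot \be_2$, $v_n = \bR \cdot \bN$, and $\etab = v_1 \be_1 + v_2 \be_2$, the vector field in question is $\etab$, and Lemma~4.2 (applied with $\bV = \bR$) gives
\begin{equation*}
\overline{\varepsilon}_1 + \overline{\varepsilon}_2 = \div \etab - 2 \cH v_n.
\end{equation*}
So the whole lemma reduces to proving $\overline{\varepsilon}_1 + \overline{\varepsilon}_2 = 2$ for this particular choice.

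The slickest way to get this is a homothety argument using Lemma~4.1. Taking $\bV = \bR$ means $\br = (1+t)\bR$ is a uniform scaling, under which the area element transforms as $dA' = (1+t)^2 \, dA$, so $\delta dA = 2\, dA$. But Lemma~4.1 says $\delta dA = (\overline{\varepsilon}_1 + \overline{\varepsilon}_2)\, dA$, and equating the two expressions yields $\overline{\varepsilon}_1 + \overline{\varepsilon}_2 = 2$. Substituting back gives $\div \etab = 2 + 2\cH v_n = 2(1 + \cH(\bR \cdot \bN))$, which is the claim.

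If I preferred a direct verification instead, I would differentiate $v_1 = \bR \cdot \be_1$ using $\bR_\alpha = A_1 \be_1$ and the Gauss formula $(\be_1)_\alpha = -p\be_2 - H_\circ \bN$ to obtain $(v_1)_\alpha = A_1 - p v_2 - H_\circ v_n$, and similarly $(v_2)_\beta = A_2 - q v_1 - K_\circ v_n$. Plugging these into the definitions of $\overline{\varepsilon}_1, \overline{\varepsilon}_2$ causes the $p v_2, H_\circ v_n, q v_1, K_\circ v_n$ terms to cancel, leaving $\overline{\varepsilon}_1 = \overline{\varepsilon}_2 = 1$.

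There is no real obstacle here; the only step requiring a bit of attention is the clean identification of $\bR - (\bR \cdot \bN)\bN$ with the $\etab$ of Section~3 when $\bV = \bR$, since after that the result is a one-line application of Lemmas 4.1 and 4.2 (or a handful of Gauss-formula substitutions).
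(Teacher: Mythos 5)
Your proposal is correct, but it takes a genuinely different route from the paper. The paper proves the lemma by a self-contained coordinate computation: it writes $\div(\bR - (\bR\cdot\bN)\bN)$ as $\frac{1}{A_1A_2}\left((\bR\cdot\be_1)_\alpha A_2 + (\bR\cdot\be_1)qA_1 + (\bR\cdot\be_2)_\beta A_1 + (\bR\cdot\be_2)pA_2\right)$ and expands with the Gauss formula until $2 + (\kappa_1+\kappa_2)(\bR\cdot\bN)$ appears. You instead identify $\bR - (\bR\cdot\bN)\bN$ with the $\etab$ of the variation field $\bV=\bR$, invoke Lemma 4.2 to trade $\div\etab - 2\cH v_n$ for $\overline{\varepsilon}_1+\overline{\varepsilon}_2$, and then evaluate that sum. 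Your primary argument that $\overline{\varepsilon}_1+\overline{\varepsilon}_2=2$ --- namely that $\bV=\bR$ generates the homothety $\br=(1+t)\bR$, under which $dA'=(1+t)^2\,dA$, so Lemma 4.1 forces the sum of strains to equal $2$ --- is valid and conceptually illuminating: it explains where the constant $2$ and the term $2\cH(\bR\cdot\bN)$ come from (they are the $\delta dA$ of a dilation). The price is dependence on both Lemmas 4.1 and 4.2, whereas the paper's proof stands alone and the lemma could in principle be stated before Section 4. Your fallback direct verification ($(v_1)_\alpha = A_1 - pv_2 - H_\circ v_n$ and its $\beta$-analogue, giving $\overline{\varepsilon}_1=\overline{\varepsilon}_2=1$) is essentially the paper's Gauss-formula computation reorganized through Lemma 4.2, and it checks out as well.
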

\begin{proof}
Using the Gauss formula, we have
\begin{equation}
\begin{aligned}
&\quad \div (\bR - (\bR \cdot \bN) \bN)\\
&= \frac{1}{A_1 A_2} ((\bR \cdot \be_1)_\alpha A_2 + (\bR \cdot \be_1) q A_1 + (\bR \cdot \be_2)_\beta A_1 + (\bR \cdot \be_2) pA_2)\\
&= \frac{1}{A_1 A_2} (A_1 A_2 + \bR \cdot (- p \be_2 - H_\circ \bN) A_2 + (\bR \cdot \be_1) q A_1\\
&\qquad+ A_2 A_1 + \bR \cdot (- q\be_1 - K_\circ \bN) A_1 + (\bR \cdot \be_2) pA_2)\\
&= \frac{1}{A_1A_2} (2 A_1 A_2 - (H_\circ A_2 + K_\circ A_1) (\bR \cdot \bN))\\
&= 2 + (\kappa_1 + \kappa_2) (\bR \cdot\bN) = 2 (1 + \cH (\bR \cdot \bN)).
\end{aligned}
\end{equation}
This proves the lemma.
\end{proof}
%
%
\begin{lem}
\label{lem:vol2}
In the local coordinates $(\alpha,\beta)$, the following relation holds
\begin{equation}
\bR \cdot \delta (\bR_\alpha \times \bR_\beta)\, d\alpha d\beta = (\div (- v_n \bR + (\bR \cdot \bN) \etab) + 2v_n) \, dA.
\end{equation}
\end{lem}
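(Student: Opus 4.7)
My plan is to compute $\bR\cdot\delta(\bR_\alpha\times\bR_\beta)$ directly via the Leibniz rule for the cross product, rearrange so that a surface divergence appears as a total derivative, and identify the remaining term with the $2v_n\,dA$ contribution.

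I would start from $\delta\bR=\bV$, which gives
$$\delta(\bR_\alpha\times\bR_\beta) = \bV_\alpha\times\bR_\beta + \bR_\alpha\times\bV_\beta.$$
Dotting with $\bR$ and using cyclic invariance of the scalar triple product yields
$$\bR\cdot\delta(\bR_\alpha\times\bR_\beta) = \bV_\alpha\cdot(\bR_\beta\times\bR) + \bV_\beta\cdot(\bR\times\bR_\alpha).$$
Next I would integrate by parts in each variable. The cross-derivative pieces $\bV\cdot(\bR_{\alpha\beta}\times\bR)$ and $\bV\cdot(\bR\times\bR_{\alpha\beta})$ cancel by antisymmetry of the cross product, and the two remaining $\bV\cdot(\bR_\beta\times\bR_\alpha)$ contributions combine to give $+2\bV\cdot(\bR_\alpha\times\bR_\beta)$. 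Since $\bR_\alpha\times\bR_\beta = A_1A_2\bN$, this last term is exactly $2A_1A_2v_n$, which becomes $2v_n\,dA$ after multiplying by $d\alpha\,d\beta$.

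It remains to identify $\bigl(\bV\cdot(\bR_\beta\times\bR)\bigr)_\alpha + \bigl(\bV\cdot(\bR\times\bR_\alpha)\bigr)_\beta$ with $A_1A_2\,\div(-v_n\bR+(\bR\cdot\bN)\etab)$. Expanding $\bR=(\bR\cdot\be_1)\be_1+(\bR\cdot\be_2)\be_2+(\bR\cdot\bN)\bN$ and using $\be_1\times\be_2=\bN$, I would compute
$$\bR_\beta\times\bR = A_2\bigl((\bR\cdot\bN)\be_1 - (\bR\cdot\be_1)\bN\bigr),\qquad \bR\times\bR_\alpha = A_1\bigl((\bR\cdot\bN)\be_2 - (\bR\cdot\be_2)\bN\bigr).$$
Dotting with $\bV=v_1\be_1+v_2\be_2+v_n\bN$, the resulting scalars are $A_2((\bR\cdot\bN)v_1-(\bR\cdot\be_1)v_n)$ and $A_1((\bR\cdot\bN)v_2-(\bR\cdot\be_2)v_n)$, which are precisely $A_2$ and $A_1$ times the $\be_1$- and $\be_2$-components of the vector $-v_n\bR+(\bR\cdot\bN)\etab$. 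The tangential divergence formula $\div(f\be_1+g\be_2)=\frac{1}{A_1A_2}\bigl((fA_2)_\alpha+(gA_1)_\beta\bigr)$, as used in the proofs of the two preceding lemmas, then closes the identification.

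The main obstacle I anticipate is the sign bookkeeping in the scalar triple products and the frame expansions above. In particular, one must observe that $\div$ in the statement is the surface divergence and effectively sees only the tangential part of its $\R^3$-valued argument $-v_n\bR+(\bR\cdot\bN)\etab$, the normal piece $-v_n(\bR\cdot\bN)\bN$ playing no role, consistent with the usage in Lemma~\ref{lem:vol1}.
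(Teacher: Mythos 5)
Your argument is correct, and it takes a genuinely different route from the paper. The paper computes $\delta(\bR_\alpha\times\bR_\beta)$ by feeding the infinitesimal strain--displacement relations into the cross product, rewriting the result in terms of $\nabla v_n$, $\div\etab$ and $\cH$ via the Weingarten formula, and then invoking Lemma~\ref{lem:vol1} to trade the $2\cH(\bR\cdot\bN)v_n$ term for $\div(\bR-(\bR\cdot\bN)\bN)-2$; this showcases the shell-theoretic machinery that is the point of the article. You instead apply the Leibniz rule to $\delta(\bR_\alpha\times\bR_\beta)=\bV_\alpha\times\bR_\beta+\bR_\alpha\times\bV_\beta$, use cyclic invariance of the scalar triple product, and integrate by parts: the mixed-derivative terms cancel by antisymmetry, the two copies of $-\bV\cdot(\bR_\beta\times\bR_\alpha)$ give exactly $2A_1A_2v_n$, and the exact terms $(A_2f)_\alpha+(A_1g)_\beta$ with $f=(\bR\cdot\bN)v_1-(\bR\cdot\be_1)v_n$, $g=(\bR\cdot\bN)v_2-(\bR\cdot\be_2)v_n$ assemble into $A_1A_2\div(-v_n\bR+(\bR\cdot\bN)\etab)$ via the coordinate formula for the surface divergence. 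I checked the frame expansions $\bR_\beta\times\bR=A_2((\bR\cdot\bN)\be_1-(\bR\cdot\be_1)\bN)$ and $\bR\times\bR_\alpha=A_1((\bR\cdot\bN)\be_2-(\bR\cdot\be_2)\bN)$ and the signs; they are right. Your approach is shorter and entirely self-contained (it needs neither Lemma~\ref{lem:vol1} nor the quantities $\overline{\vartheta},\overline{\psi},\overline{\varepsilon}_i$), at the cost of bypassing the strain--displacement formalism the paper is deliberately exercising; your closing remark that $\div$ here acts only on the tangential part of its argument is exactly the convention the paper uses implicitly in its own final step, and is worth stating either way.
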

\begin{proof}
We first show the following formula:
\begin{equation}
\label{eq:step1}
\delta (\bR_\alpha \times \bR_\beta) = - A_1 A_2 \nabla v_n + (\div \etab - 2 \cH v_n) (\bR_\alpha \times \bR_\beta) + v_1 A_2 \bN_\alpha + v_2 A_1 \bN_\beta.
\end{equation}
Using the equation \eqref{eq:tangent1} and $\bR_\alpha \times \bR_\beta = A_1 A_2 \bN$, we have
\begin{equation}
\begin{aligned}
&\qquad \br_\alpha \times \br_\beta = t \overline{\vartheta} A_2 \bR_\alpha + t \overline{\psi} A_1 \bR_\beta 
+ \bR_\alpha \times \bR_\beta + t (\overline{\varepsilon}_1 + \overline{\varepsilon}_2) (\bR_\alpha \times \bR_\beta),\\
&\iff \frac{\br_\alpha \times \br_\beta - \bR_\alpha \times \bR_\beta}{t} 
= \overline{\vartheta} A_2 \bR_\alpha + \overline{\psi} A_1 \bR_\beta 
    + (\overline{\varepsilon}_1 + \overline{\varepsilon}_2) (\bR_\alpha \times \bR_\beta).
\end{aligned}
\end{equation}
Therefore we have
\begin{equation}
\delta (\bR_\alpha \times \bR_\beta) 
= \overline{\vartheta} A_2 \bR_\alpha + \overline{\psi} A_1 \bR_\beta 
    + (\overline{\varepsilon}_1 + \overline{\varepsilon}_2) (\bR_\alpha \times \bR_\beta).
\end{equation}
Since
\begin{equation}
\begin{aligned}
&\qquad \nabla v_n 
= \frac{(v_n)_\alpha}{A_1} \be_1 + \frac{(v_n)_\beta}{A_2} \be_2 
= (- \overline{\vartheta} - \kappa_1 v_1) \be_1 + (- \overline{\psi} - \kappa_2 v_2) \be_2,\\
&\iff A_1 A_2 \nabla v_n 
= - \overline{\vartheta} A_2 \bR_\alpha - \overline{\psi} A_1 \bR_\beta 
    + v_1 A_2 (- \kappa_1 \bR_\alpha) + v_2 A_1 (- \kappa_2 \bR_\beta),
\end{aligned}
\end{equation}
the Weingarten formula \eqref{eq:Weingarten} shows
\begin{equation}
\overline{\vartheta} A_2 \bR_\alpha + \overline{\psi} A_1 \bR_\beta 
= - A_1 A_2 \nabla v_n + v_1 A_2 \bN_\alpha + v_2 A_1 \bN_\beta.
\end{equation}
This proves the equation \eqref{eq:step1}.
Next, we show the equation:
\begin{equation}
\label{eq:step2}
\bR \cdot (v_1 A_2 \bN_\alpha + v_2 A_1 \bN_\beta) 
= A_1 A_2 \nabla (\bR \cdot \bN) \cdot \etab.
\end{equation}
A direct calculation shows
\begin{equation}
\begin{aligned}
&\quad \bR \cdot (v_1 A_2 \bN_\alpha + v_2 A_1 \bN_\beta)\\
&= (v_1 A_2 \bR \cdot \bN)_\alpha - (v_1 A_2 \bR)_\alpha \cdot \bN + (v_2 A_1 \bR)_\beta \cdot \bN \\
&= A_1 A_2 \div ((\bR \cdot \bN) \etab) - ((v_1 A_2)_\alpha + (v_2 A_1)_\beta) (\bR \cdot \bN) \\
&= A_1 A_2 (\div ((\bR \cdot \bN) \etab) - (\div \etab) (\bR \cdot \bN))
= A_1 A_2 (\nabla (\bR \cdot \bN) \cdot \etab),
\end{aligned}
\end{equation}
which proves the equation \eqref{eq:step2}.
Combining the equations \eqref{eq:step1} and \eqref{eq:step2}, we conclude
\begin{equation}
\begin{aligned}
&\quad \bR \cdot \delta (\bR_\alpha \times \bR_\beta) \\
&= - A_1 A_2 \bR \cdot \nabla v_n + (\div \etab - 2 \cH v_n) \bR \cdot (\bR_\alpha \times \bR_\beta)\\
&\qquad+A_1 A_2 \div ((\bR \cdot \bN)\etab) - (\div \etab) \bR \cdot (\bR_\alpha \times \bR_\beta) \\
&= - A_1 A_2 \bR \cdot \nabla v_n - (2 \cH (\bR \cdot \bN)) A_1 A_2 v_n + A_1 A_2 \div ((\bR \cdot \bN) \etab) \\
&= - A_1 A_2 \bR \cdot \nabla v_n - (\div (\bR - (\bR \cdot \bN) \bN) - 2) A_1 A_2 v_n + A_1 A_2 \div ((\bR \cdot \bN) \etab)\\
&= A_1 A_2 (\div (- v_n \bR + (\bR \cdot \bN) \etab) + 2 v_n).
\end{aligned}
\end{equation}
This proves the lemma.
\end{proof}
\begin{thm}
\label{thm:volume}
The first variation of the volume $V$ is given by
\begin{equation}
\delta V = \int_M v_n \, dA 
    + \frac{1}{3} \int_{\partial M} ((\bR \cdot \bN)\etab - v_n \bR) \cdot \bt \, ds.
\end{equation}
\end{thm}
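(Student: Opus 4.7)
The plan is to differentiate under the integral sign in the representation
\[
V = \frac{1}{3} \int_M \bR \cdot (\bR_\alpha \times \bR_\beta)\, d\alpha\, d\beta,
\]
which, by the product rule, produces two contributions:
\[
\delta V = \frac{1}{3} \int_M \bV \cdot (\bR_\alpha \times \bR_\beta)\, d\alpha\, d\beta + \frac{1}{3} \int_M \bR \cdot \delta(\bR_\alpha \times \bR_\beta)\, d\alpha\, d\beta.
\]
For the first piece I would use $\bR_\alpha \times \bR_\beta = A_1 A_2\, \bN$ together with the decomposition $\bV = \etab + v_n \bN$ to reduce the integrand to $v_n A_1 A_2\, d\alpha\, d\beta = v_n\, dA$, yielding $\tfrac{1}{3}\int_M v_n\, dA$.

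For the second piece the work has already been done in Lemma \ref{lem:vol2}, which rewrites $\bR \cdot \delta(\bR_\alpha \times \bR_\beta)\, d\alpha\, d\beta$ as $(\div(-v_n \bR + (\bR \cdot \bN)\etab) + 2 v_n)\, dA$. I would apply the divergence theorem on the surface to convert the divergence term to a boundary integral, obtaining
\[
\frac{1}{3}\int_{\partial M} ((\bR \cdot \bN)\etab - v_n \bR) \cdot \bt\, ds + \frac{2}{3}\int_M v_n\, dA.
\]

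Finally I would add the two pieces. The $v_n$ bulk terms combine as $\tfrac{1}{3} + \tfrac{2}{3} = 1$, giving exactly $\int_M v_n\, dA$, and the boundary term is already in the desired form. I expect no real obstacle here: the conceptual effort was absorbed into Lemmas \ref{lem:vol1} and \ref{lem:vol2}, and what remains is essentially bookkeeping. The only place where care is needed is in keeping track of the $1/3$ factor when combining the two contributions, and in recognising that the first term does not require any integration by parts because $\bV \cdot \bN = v_n$ extracts precisely the normal component against the area form $A_1 A_2\, \bN\, d\alpha\, d\beta$.
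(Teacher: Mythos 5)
Your proposal is correct and takes essentially the same route as the paper's proof: the product rule applied to $\bR \cdot (\bR_\alpha \times \bR_\beta)$, the observation that $\bV \cdot (\bR_\alpha \times \bR_\beta) = v_n A_1 A_2$, Lemma \ref{lem:vol2} for the remaining term, and the divergence theorem. The only cosmetic difference is that you convert the divergence term to a boundary integral before summing the two pieces, while the paper first combines the bulk $v_n$ contributions into $3 v_n$ and then integrates; the bookkeeping is identical.
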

\begin{proof}
It follows from Lemma \ref{lem:vol1} and \ref{lem:vol2} that we have
\begin{equation}
\begin{aligned}
\delta ((\bR \cdot \bN) \, dA)
&= \delta (\bR \cdot (\bR_\alpha \times \bR_\beta) \, d\alpha d\beta)\\
&= \bV \cdot (\bR_\alpha \times \bR_\beta) 
    + \bR \cdot \delta (\bR_\alpha \times \bR_\beta)) \, d\alpha d\beta\\
&= (3v_n + \div (- v_n \bR + (\bR \cdot \bN) \etab)) \, dA,
\end{aligned}
\end{equation}
in the local coordinates $(\alpha, \beta)$.
By using the divergence theorem, we have
\begin{equation}
\begin{aligned}
\delta V 
&= \frac{1}{3} \int_M (3v_n + \div (- (\bV \cdot \bN) \bR + v_n \etab)) \, dA\\
&= \int_M v_n \, dA + \frac{1}{3} \int_M \div (- v_n \bR + (\bR \cdot \bN) \etab) \, dA \\
&= \int_M v_n \, dA + \frac{1}{3} \int_{\partial M} (- v_n \bR + (\bR \cdot \bN) \etab) \cdot \bt \, ds
\end{aligned}
\end{equation}
which proves the statement.
\end{proof}

\section{The first variation of the mean curvature}

\begin{lem}
The first variations of the principal curvatures are given by
\begin{equation}
\delta \kappa_1 = - \frac{\overline{\vartheta}_\alpha + p \overline{\psi}}{A_1} - \overline{\varepsilon}_1 \kappa_1, \quad
\delta \kappa_2 = - \frac{\overline{\psi}_\beta + q \overline{\vartheta}}{A_2} - \overline{\varepsilon}_2 \kappa_2.
\end{equation}
\end{lem}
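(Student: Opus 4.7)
The plan is to differentiate in $t$ the Rodrigues-type identity
\[
\kappa_1 \, (\bR_\alpha \cdot \bR_\alpha) = - \bN_\alpha \cdot \bR_\alpha,
\]
which is an immediate consequence of \eqref{eq:Weingarten}. For the one-parameter family of deformed surfaces the analogous identity, with $\bR$ and $\bN$ replaced by their deformed counterparts, computes the normal curvature of the deformed surface along the $\alpha$-coordinate line; this quantity coincides with the principal curvature $\kappa_1$ at $t = 0$, so its $t$-derivative there is precisely the desired $\delta \kappa_1$. The argument for $\delta \kappa_2$ is identical after interchanging the roles of $\alpha$ and $\beta$, so I concentrate on $\kappa_1$.

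Two preparatory pieces are needed. The first is $\delta \bN$: starting from the expansion of $\br_\alpha \times \br_\beta / (A_1 A_2)$ already recorded in the proof of the first lemma of Section~4 and dividing by $\|\br_\alpha \times \br_\beta\| = (1 + \varepsilon_1 + \varepsilon_2) A_1 A_2 + O(t^2)$, the first-order term of the normalised cross product reads
\[
\delta \bN = \overline{\vartheta}\, \be_1 + \overline{\psi}\, \be_2.
\]
The second is $\bV_\alpha$, which I expand in the moving frame using the Gauss--Weingarten formulas to obtain
\[
\bV_\alpha = A_1 \overline{\varepsilon}_1 \, \be_1 + A_1 \overline{\omega}_1 \, \be_2 - A_1 \overline{\vartheta} \, \bN;
\]
in particular $\bR_\alpha \cdot \bV_\alpha = A_1^2 \overline{\varepsilon}_1$.

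Differentiating the Rodrigues identity at $t = 0$ and solving for $\delta \kappa_1$ yields
\[
\delta \kappa_1 \cdot A_1^2 = - (\delta \bN)_\alpha \cdot \bR_\alpha - \bN_\alpha \cdot \bV_\alpha - 2 \kappa_1 \, \bR_\alpha \cdot \bV_\alpha.
\]
Substituting $\bN_\alpha = - \kappa_1 \bR_\alpha$ collapses the last two summands into $- \kappa_1 A_1^2 \overline{\varepsilon}_1$. For the remaining term I expand $(\delta \bN)_\alpha$ using $(\be_1)_\alpha = - p \be_2 - H_\circ \bN$ and $(\be_2)_\alpha = p \be_1$ from the Gauss formula; the inner product with $\bR_\alpha = A_1 \be_1$ then selects only the $\be_1$-coefficient and produces $A_1 (\overline{\vartheta}_\alpha + p\, \overline{\psi})$. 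Dividing through by $A_1^2$ gives the formula claimed in the lemma.

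The only delicate point I anticipate is the derivation of $\delta \bN$: the normalisation factor $(1 + \varepsilon_1 + \varepsilon_2)^{-1}$ must be retained throughout the expansion, otherwise the tangential components of the deformed unit normal acquire spurious first-order contributions. Once that is handled, the rest is a mechanical application of the Gauss--Weingarten relations collected in the Preliminaries.
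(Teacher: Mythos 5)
Your proof is correct and is essentially the paper's own argument in slightly different packaging: both identify $\delta \kappa_1$ with the $t$-derivative of the normal curvature $-\bN'_\alpha \cdot \br_\alpha / \|\br_\alpha\|^2$ along the $\alpha$-coordinate line, both use the same first-order expansion $\delta \bN = \overline{\vartheta}\, \be_1 + \overline{\psi}\, \be_2$ of the deformed unit normal, and both extract the $\be_1$-coefficient via the Gauss--Weingarten relations. The only difference is organizational --- you apply the product rule to the un-normalized identity $\kappa_1 A_1^2 = - \bN_\alpha \cdot \bR_\alpha$ and work with $\bV_\alpha$, whereas the paper expands the quotient $-\bN'_\alpha \cdot \be_1' / A_1'$ directly from the deformed frame quoted from Novozhilov.
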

\begin{proof}
The frame $(\be_1', \be_2', \bN')$ on the deformed surface $\br$ is given by (see \cite{Novozhilov}, p.17):
\begin{equation}
\begin{aligned}
\be_1' &= \be_1 + \omega_1 \be_2 - \vartheta \bN + O(t^2), \\
\be_2' &= \omega_2 \be_1 + \be_2 - \psi \bN + O(t^2), \\
\bN' &= \vartheta \be_1 + \psi \be_2 + \bN + O(t^2).
\end{aligned}
\end{equation}
The principal curvatures $\kappa_1', \kappa_2'$ for the deformed surface $\br$ can be computed by
\begin{equation}
\kappa_1' = - \frac{\bN_\alpha' \cdot \be_1'}{A_1'}, \quad
\kappa_2' = - \frac{\bN_\beta' \cdot \be_2'}{A_2'}.
\end{equation}
By using the Gauss-Weingarten formula, we have
\begin{equation}
\begin{aligned}
\bN_\alpha' &= (\vartheta_\alpha + p\psi + H_\circ) \be_1 + (\psi_\alpha - p \vartheta) \be_2 - \vartheta H_\circ \bN +O(t^2),\\
\bN_\beta' &= (\vartheta_\beta - q\psi) \be_1 + (\psi_\beta + q \vartheta + K_\circ) \be_2 - \psi K_\circ \bN + O(t^2).
\end{aligned}
\end{equation}
Then we have
\begin{equation}
\bN_\alpha' \cdot \be_1' = \vartheta_\alpha + p \psi + H_\circ + O(t^2), \quad
\bN_\beta' \cdot \be_2' = \psi_\beta + q \vartheta + K_\circ + O(t^2).
\end{equation}
Therefore
\begin{equation}
\begin{aligned}
\kappa_1' 
= - \frac{\vartheta_\alpha + p \psi + H_\circ}{A_1 (1 + \varepsilon_1)} + O(t^2)
&= - \frac{\vartheta_\alpha + p \psi + H_\circ}{A_1} (1 - \varepsilon_1) + O(t^2)\\
&= - \frac{\vartheta_\alpha + p \psi + H_\circ}{A_1} + \varepsilon_1 \frac{H_\circ}{A_1} + O(t^2), \\
\kappa_2'
= - \frac{\psi_\beta + q \vartheta + K_\circ}{A_2 (1 + \varepsilon_2)} + O(t^2)
&= - \frac{\psi_\beta + q \vartheta + K_\circ}{A_2} (1 - \varepsilon_2) + O(t^2)\\
&= - \frac{\psi_\beta + q \vartheta + K_\circ}{A_2} + \varepsilon_2 \frac{K_\circ}{A_2} + O(t^2).
\end{aligned}
\end{equation}
Or equivalently,
\begin{equation}
\label{eq:diff_curvature}
\kappa_1' - \kappa_1 = - \frac{\vartheta_\alpha + p \psi}{A_1} - \varepsilon_1 \kappa_1 + O(t^2), \quad
\kappa_2' - \kappa_2 = - \frac{\psi_\beta + q \vartheta}{A_2} - \varepsilon_2 \kappa_2 + O(t^2).
\end{equation}
The statement follows from the replacement $(u, v, w) \to (tv_1, tv_2, tv_n)$.
\end{proof}
\begin{rmk}
The equation \eqref{eq:diff_curvature} is essentially pointed out in \cite{Novozhilov}, p.25.
\end{rmk}
\begin{lem}
\label{lem:mean1}
The first variation of the mean curvature is given by
\begin{equation}
\delta \cH = \frac{1}{2} \div \nabla v_n + \nabla \cH \cdot \etab + (2 \cH^2 - \cK) v_n,
\end{equation}
where $\cK = \kappa_1 \kappa_2$ is the Gaussian curvature.
\end{lem}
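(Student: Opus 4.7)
The plan is to start from the previous lemma, which gives $2\delta\cH = \delta\kappa_1 + \delta\kappa_2$ as an explicit expression in $\overline{\vartheta}, \overline{\psi}, \overline{\varepsilon}_1, \overline{\varepsilon}_2$, and then to reorganize everything according to which of the three target pieces $\tfrac{1}{2}\div\nabla v_n$, $\nabla\cH\cdot\etab$, $(2\cH^2-\cK)v_n$ each term contributes to.

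The first step is a simplification: using $H_\circ=-\kappa_1 A_1$ and $K_\circ=-\kappa_2 A_2$, rewrite
\[
\overline{\vartheta}=-\frac{(v_n)_\alpha}{A_1}-\kappa_1 v_1,\qquad \overline{\psi}=-\frac{(v_n)_\beta}{A_2}-\kappa_2 v_2,
\]
and similarly expand $\overline{\varepsilon}_1\kappa_1$ and $\overline{\varepsilon}_2\kappa_2$. The term $H_\circ v_n/A_1$ inside $\overline{\varepsilon}_1$ becomes $-\kappa_1 v_n$, so $-\overline{\varepsilon}_1\kappa_1-\overline{\varepsilon}_2\kappa_2$ contributes a clean piece $(\kappa_1^2+\kappa_2^2)v_n=(4\cH^2-2\cK)v_n$, which will account for the $(2\cH^2-\cK)v_n$ factor after dividing by $2$.

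Next I would sort the remaining terms by their variational content. The pieces $(v_1)_\alpha\kappa_1/A_1$ coming from $-\overline{\vartheta}_\alpha/A_1$ cancel exactly against the corresponding contribution from $-\overline{\varepsilon}_1\kappa_1$, and similarly on the $\beta$ side. For the terms with undifferentiated $v_1$ and $v_2$, I would collect the mixed coefficients like $(\kappa_1)_\alpha/A_1$, $q\kappa_1/A_2$, $-q\kappa_2/A_2$ coming from $-\overline{\vartheta}_\alpha/A_1$, $-q\overline{\vartheta}/A_2$, and $-\overline{\varepsilon}_2\kappa_2$ respectively, and invoke the Mainardi--Codazzi equation $(K_\circ)_\alpha=qH_\circ$ in the form $(\kappa_2)_\alpha = qA_1(\kappa_1-\kappa_2)/A_2$ (and symmetrically $(\kappa_1)_\beta = pA_2(\kappa_2-\kappa_1)/A_1$) to convert $q(\kappa_1-\kappa_2)/A_2$ into $(\kappa_2)_\alpha/A_1$. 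The $v_1$-coefficient then collapses to $((\kappa_1)_\alpha+(\kappa_2)_\alpha)/A_1=2\cH_\alpha/A_1$, and likewise for $v_2$; together this produces $2\nabla\cH\cdot\etab$.

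Finally, the pure $v_n$-derivative terms $\frac{1}{A_1}\partial_\alpha\!\left(\frac{(v_n)_\alpha}{A_1}\right)+\frac{1}{A_2}\partial_\beta\!\left(\frac{(v_n)_\beta}{A_2}\right)+\frac{q(v_n)_\alpha+p(v_n)_\beta}{A_1A_2}$ need to be identified with $\div\nabla v_n$. I would verify this by expanding the Laplace--Beltrami formula $\div\nabla v_n=\frac{1}{A_1A_2}\bigl((A_2(v_n)_\alpha/A_1)_\alpha+(A_1(v_n)_\beta/A_2)_\beta\bigr)$ and using $(A_1)_\beta=pA_2$, $(A_2)_\alpha=qA_1$ to see that the two expressions agree termwise. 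Dividing the assembled identity by $2$ gives the stated formula. The main obstacle is not any single step but the bookkeeping: one must track about a dozen terms and apply Mainardi--Codazzi at precisely the right place so that the $v_1,v_2$ coefficients regroup into a total gradient of $\cH$ rather than a messier curvature combination.
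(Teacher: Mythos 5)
Your proposal is correct and follows essentially the same route as the paper: both start from the preceding lemma for $\delta\kappa_1,\delta\kappa_2$, identify the raw $v_n$ terms as $(\kappa_1^2+\kappa_2^2)v_n=(4\cH^2-2\cK)v_n$, and use the Mainardi--Codazzi equations to turn the coefficients of the undifferentiated $v_1,v_2$ into derivatives of $\cH$. The only difference is organizational: the paper packages the computation into divergence identities (in particular the auxiliary identity $\div(\kappa_2 v_1\be_1+\kappa_1 v_2\be_2)-\overline{\varepsilon}_1\kappa_2-\overline{\varepsilon}_2\kappa_1=2\cK v_n$), whereas you collect coefficients termwise; your bookkeeping checks out, including the cancellation of the $\kappa_i(v_i)$-derivative terms and the identification of the remaining $v_n$-derivative terms with $\div\nabla v_n$.
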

\begin{proof}
If we note the relation
\begin{equation}
\nabla v_n = (- \overline{\vartheta} - \kappa_1 v_1) \be_1 + (- \overline{\psi} - \kappa_2 v_2) \be_2,
\end{equation}
then we have
\begin{equation}
\begin{aligned}
&\quad \delta (\kappa_1 + \kappa_2)\\
&= - \frac{\overline{\vartheta}_\alpha + p \overline{\psi}}{A_1} 
    - \frac{\overline{\psi}_\beta + q \overline{\vartheta}}{A_2} 
        - \overline{\varepsilon}_1 \kappa_2 - \overline{\varepsilon}_2 \kappa_1\\
&= - \div (\overline{\vartheta} \be_1 + \overline{\psi} \be_2) 
    - (\overline{\varepsilon}_1 + \overline{\varepsilon}_2)(\kappa_1 + \kappa_2) 
        + \overline{\varepsilon}_1 \kappa_2 + \overline{\varepsilon}_2 \kappa_1\\
&= \div (\nabla v_n + \kappa_1 v_1 \be_1 + \kappa_2 v_2 \be_2) 
    - 2 \cH (\div \etab - 2 \cH v_n) 
        + \overline{\varepsilon}_1 \kappa_2 + \overline{\varepsilon}_2 \kappa_1\\
&= \div (\nabla v_n + 2 \cH \etab - \kappa_2 v_1 \be_1 - \kappa_1 v_2 \be_2) 
    - 2\cH \div \etab + 4 \cH^2 v_n 
        + \overline{\varepsilon}_1 \kappa_2 + \overline{\varepsilon}_2 \kappa_1\\
&= \div \nabla v_n + 2 \nabla \cH \cdot \etab + 4 \cH^2 v_n 
    - \div (\kappa_2 v_1 \be_1 + \kappa_1 v_2 \be_2) 
        + \overline{\varepsilon}_1 \kappa_2 + \overline{\varepsilon}_2 \kappa_1.
\end{aligned}
\end{equation}
If we show the equation
\begin{equation}
\label{eq:key1}
\div (\kappa_2 v_1 \be_1 + \kappa_1 v_2 \be_2) - \overline{\varepsilon}_1 \kappa_2 - \overline{\varepsilon}_2 \kappa_1 = 2 \cK v_n,
\end{equation}
then
\begin{equation}
\delta (\kappa_1 + \kappa_2) = \div \nabla v_n + 2 \nabla \cH \cdot \etab + (4 \cH^2 - 2 \cK) v_n,
\end{equation}
which proves the statement.
Applying the Mainardi-Codazzi equations, we have
{\normalsize
\begin{equation}
\begin{aligned}
&\qquad \div (\kappa_2 v_1 \be_1 + \kappa_1 v_2 \be_2) 
    - \overline{\varepsilon}_1 \kappa_2 - \overline{\varepsilon}_2 \kappa_1\\
&= \frac{1}{A_1} ((\kappa_2 v_1)_\alpha + p \kappa_1 v_2) 
    + \frac{1}{A_2} ((\kappa_1 v_2)_\beta + q \kappa_2 v_1)\\
&\qquad - \frac{\kappa_2}{A_1} ((v_1)_\alpha + pv_2 - \kappa_1 A_1 v_n) 
    - \frac{\kappa_1}{A_2} ((v_2)_\beta + q v_1 - \kappa_2 A_2 v_n)\\
&= \frac{1}{A_1A_2} ((\kappa_2)_\alpha A_2 v_1 + p (K_\circ + \kappa_1 A_2) v_2 
    + (\kappa_1)_\beta A_1 v_2 + q (H_\circ + \kappa_2 A_1)v_1) + 2 \cK v_n\\
&= \frac{1}{A_1A_2} ((q H_\circ - (K_\circ)_\alpha)v_1 + (p K_\circ - (H_\circ)_\beta) v_2) + 2 \cK v_n
= 2 \cK v_n.
\end{aligned}
\end{equation}
}
This shows the equation \eqref{eq:key1} and proves the lemma.
\end{proof}
\begin{thm}
\label{thm:mean}
The first variation of the integral of the mean curvature is given by
\begin{equation}
\delta \int_M \cH \, dA = - \int_M \cK v_n \, dA + \int_{\partial M} \left( \frac{1}{2} \nabla v_n + \cH \etab \right) \cdot \bt \, ds.
\end{equation}
\end{thm}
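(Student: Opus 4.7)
The plan is to apply the Leibniz rule $\delta(\cH\,dA) = (\delta\cH)\,dA + \cH\,(\delta dA)$ pointwise, substitute the two first-variation formulas that have already been established, collect terms so that a single surface divergence appears, and finish by invoking the divergence theorem on the surface.

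First I would combine Lemma 5.2 (variation of $\cH$) with the area element formula $\delta dA = (\div\etab - 2\cH v_n)\,dA$ from Section 4. This gives
\begin{equation*}
\delta(\cH\,dA) = \left[\tfrac{1}{2}\div\nabla v_n + \nabla\cH\cdot\etab + (2\cH^2 - \cK)v_n\right]dA + \cH\left[\div\etab - 2\cH v_n\right]dA.
\end{equation*}
The two $\cH^2 v_n$ terms cancel, leaving $-\cK v_n$ as the only pointwise contribution without a divergence.

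Next I would recognize the product-rule identity $\nabla\cH\cdot\etab + \cH\,\div\etab = \div(\cH\etab)$ for the tangential vector field $\cH\etab$ on the surface. Together with $\frac{1}{2}\div\nabla v_n = \div(\frac{1}{2}\nabla v_n)$, this lets me write
\begin{equation*}
\delta(\cH\,dA) = \div\!\left(\tfrac{1}{2}\nabla v_n + \cH\etab\right)dA - \cK v_n\,dA.
\end{equation*}
Integrating over $M$ and applying the surface divergence theorem (as already used in the proof of Theorem 4.3) to the first term yields the boundary integral $\int_{\partial M}(\frac{1}{2}\nabla v_n + \cH\etab)\cdot\bt\,ds$, while the second term contributes $-\int_M \cK v_n\,dA$.

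There is no real obstacle here, since the two heavy computations were already carried out in the preceding lemmas; the work is purely algebraic bookkeeping. The step most at risk of a sign or bookkeeping slip is checking that the $2\cH^2 v_n$ contributions from $\delta\cH$ and from $\cH\cdot\delta dA$ cancel exactly and that the tangential product rule $\nabla\cH\cdot\etab + \cH\div\etab = \div(\cH\etab)$ is the right identity to use, but both are immediate. Hence the theorem follows directly from Lemmas 4.1, 4.2 and 5.2 together with the divergence theorem.
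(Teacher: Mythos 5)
Your proposal is correct and follows exactly the paper's own argument: Leibniz rule on $\cH\,dA$, substitution of the established formulas for $\delta\cH$ and $\delta dA$, cancellation of the $2\cH^2 v_n$ terms, the product rule $\nabla\cH\cdot\etab + \cH\,\div\etab = \div(\cH\etab)$, and the surface divergence theorem. No gaps; this matches the paper's proof of Theorem \ref{thm:mean} step for step.
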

\begin{proof}
It follows from Lemma \ref{lem:mean1} and the divergence theorem that we have
\begin{equation}
\begin{aligned}
&\qquad \delta \int_M \cH \, dA\\
&= \int_M (\delta \cH) \, dA + \cH \, \delta dA \\
&= \int_M \left( \frac{1}{2} \div \nabla v_n + \nabla \cH \cdot \etab + (2 \cH^2 - \cK) v_n + \cH \div \etab - 2 \cH^2 v_n \right) \, dA\\
&= \int_M \left( \div \left( \frac{1}{2} \nabla v_n + \cH \etab \right) - \cK v_n \right) \, dA\\
&= - \int_M \cK v_n \, dA + \int \left( \frac{1}{2} \nabla v_n + \cH \etab \right) \cdot \bt \, ds.
\end{aligned}
\end{equation}
\end{proof}
Combining the Theorem \ref{thm:area}, \ref{thm:volume} and \ref{thm:mean}, we have the following result:
\begin{thm}
For constants $a, b, c \in \R$, we define the functional $E$ as follows:
\begin{equation}
E = \int_M (a \cH + b) \, dA + c V.
\end{equation}
Then the first variation formula is given by
\begin{equation}
\begin{aligned}
\delta E &= - \int_M (a \cK + 2 b \cH - c) v_n \, dA\\
&\quad+ \int_{\partial M} \left[ a \left( \frac{1}{2} \nabla v_n + \cH \etab \right) + b \etab + \frac{c}{3} ((\bR \cdot \bN)\etab - v_n \bR) \right] \cdot \bt \,ds.
\end{aligned}
\end{equation}
In particular, the equilibrium condition away from the boundary is given by
\begin{equation}
a \cK + 2 b \cH = c,
\end{equation}
that is, we have the linear Weingarten condition.
\end{thm}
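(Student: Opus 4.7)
The plan is to invoke linearity of the first variation: writing
\[
\delta E \;=\; a\,\delta\!\int_M \cH\, dA \;+\; b\,\delta\!\int_M dA \;+\; c\,\delta V,
\]
each of the three pieces is already computed in Theorems \ref{thm:area}, \ref{thm:volume}, and \ref{thm:mean}, so no new surface-calculus identity is needed. I would simply substitute these three formulas and group the result into an interior $M$-integral plus a boundary $\partial M$-integral.

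For the interior integrand, note that $\bV\cdot\bN = v_n$, so the area term contributes $-2b\cH v_n$, the mean-curvature term contributes $-a\cK v_n$, and the volume term contributes $c v_n$; summing gives $-(a\cK + 2b\cH - c)v_n$, which is the stated interior density. For the boundary integrand, Theorem \ref{thm:area} supplies $b\,\etab\cdot\bt$, Theorem \ref{thm:mean} supplies $a\bigl(\tfrac12 \nabla v_n + \cH\etab\bigr)\cdot\bt$, and Theorem \ref{thm:volume} supplies $\tfrac{c}{3}\bigl((\bR\cdot\bN)\etab - v_n\bR\bigr)\cdot\bt$; these are already in the exact form asserted in the theorem, so only bracketing is required.

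For the equilibrium statement, I would restrict to variations supported in the interior, i.e.\ those with $\bV$ and its derivatives vanishing in a neighborhood of $\partial M$, so that the boundary integral drops out. Then $\delta E = 0$ reduces to
\[
\int_M (a\cK + 2b\cH - c)\, v_n\, dA \;=\; 0 \qquad \text{for every admissible } v_n,
\]
and the fundamental lemma of the calculus of variations yields the pointwise relation $a\cK + 2b\cH = c$, i.e.\ the linear Weingarten condition. No obstacle is expected beyond careful bookkeeping of signs and the identification $\bV\cdot\bN = v_n$ between the normal component of the variation vector and the scalar normal variation appearing in the other two theorems.
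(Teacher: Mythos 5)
Your proposal is correct and follows exactly the route the paper takes: the paper simply states that the theorem follows by ``combining'' Theorems \ref{thm:area}, \ref{thm:volume} and \ref{thm:mean} via linearity of the first variation, which is precisely your substitution-and-regrouping argument, and your sign bookkeeping and the identification $\bV\cdot\bN = v_n$ check out. Your added remark on restricting to compactly supported variations and invoking the fundamental lemma of the calculus of variations to obtain the pointwise linear Weingarten condition is a detail the paper leaves implicit, and it is stated correctly.
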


\section*{Acknowledgments}
This study is supported by JST CREST Grant No. JPMJCR1911.
The author would like to thank Prof. Makoto Ohsaki at Kyoto University for his valuable comments.

\bibliographystyle{amsplain}
\bibliography{main}

\providecommand{\bysame}{\leavevmode\hbox to3em{\hrulefill}\thinspace}
\providecommand{\MR}{\relax\ifhmode\unskip\space\fi MR }
\providecommand{\MRhref}[2]{%
  \href{http://www.ams.org/mathscinet-getitem?mr=#1}{#2}
}
\providecommand{\href}[2]{#2}
\begin{thebibliography}{10}

\bibitem{Chen1}
B.~Y. Chen, \emph{Geometry of submanifolds}, Marcel Dekker, Inc., New York,
  1973.

\bibitem{Chen2}
\bysame, \emph{On a variational problem on hypersurfaces}, J. London Math. Soc.
  \textbf{6} (1973), no.~2, 321--325.

\bibitem{GreenZerna}
A.~E. Green and W.~Zerna, \emph{Theoretical elasticity}, Dover Publications,
  1968.

\bibitem{Hsuing}
C.~C. Hsuing, \emph{Some integral formulas for closed hypersurfaces}, Math.
  Scand. \textbf{2} (1954), 286--294.

\bibitem{Love}
A.~E.~H. Love, \emph{A treatise on the mathematical theory of elasticity},
  Dover Publications, 1944.

\bibitem{Nitsche}
J.~C.~C. Nitsche, \emph{Lectures on minimal surfaces volume 1}, Cambridge
  University Press, 1989.

\bibitem{Novozhilov}
V.~V. Novozhilov, \emph{Thin shell theory}, Groningen: P. Noordhoff Ltd., 1965.

\bibitem{PinlTrapp}
M.~Pinl and H.~Trapp, \emph{Station\"are kr\"ummungsdichten auf hyperfl\"achen
  des euklideschen $r_{n+1}$}, Math. Ann. \textbf{176} (1968), 257--292.

\bibitem{Reilly}
R.~C. Reilly, \emph{Variational properties of functions of the mean curvatures
  for hypersurfaces in space forms}, J. Diff. Geom. \textbf{8} (1973),
  465--477.

\bibitem{RogersSchief}
C.~Rogers and W.~K. Schief, \emph{On the equilibrium of shell membranes under
  normal loading. {{Hidden}} integrability.}, Proc. R. Soc. Lond., Ser. A,
  Math. Phys. Eng. Sci. \textbf{459} (2003), no.~2038, 2449--2462.

\end{thebibliography}
\end{document}